\newcommand{\ignore}[1]{}
\newcommand{\E}{\mathbb{E}}
\DeclareMathOperator{\tr}{\text{tr}}
\newtheorem{definition}{Definition}[section]
\newtheorem{proposition}[definition]{Proposition}
\author[P.~Morfe]{Peter Morfe}
\email[P.~Morfe]{peter.morfe@mis.mpg.de}
\author[F.~Otto]{Felix Otto}
\email[F.~Otto]{felix.otto@mis.mpg.de}
\author[C.~Wagner]{Christian Wagner}
\email[C.~Wagner]{christian.wagner@mis.mpg.de}
\begin{document}

\title[Gaussian free-field as stream function: continuum version]{The Gaussian free-field as a stream function: continuum version of the scale-by-scale homogenization result}

\maketitle

\begin{abstract}
This note is
about a drift-diffusion process $X$ with a time-independent, divergence-free drift $b$,
where $b$ is a smooth Gaussian field that decorrelates over large scales.
In two space dimensions, this just fails to fall into the standard theory of stochastic
homogenization, and leads to a borderline super-diffusive behavior.
In a previous paper by Chatzigeorgiou, Morfe, Otto, and Wang (2022), precise asymptotics of the annealed second moments of $X$
were derived by characterizing the asymptotics of the effective diffusivity $\lambda_L$
in terms of an artificially introduced large-scale cut-off $L$.
The latter was carried out by a scale-by-scale homogenization, and implemented
by monitoring the corrector $\phi_L$ for geometrically increasing cut-off scales $L^+=ML$. 
In fact, proxies $(\tilde\phi_L,\tilde\sigma_L)$ for the corrector and flux corrector
were introduced incrementally and the residuum $f_L$ estimated.

\smallskip

In this short supplementary note, we reproduce the arguments of the above paper in the continuum setting
of $M\downarrow 1$. This has the advantage that the definition of the proxies
$(\tilde\phi_L,\tilde\sigma_L)$
becomes more transparent -- it is given by a simple It\^{o} SDE with $\ln L$ acting as
a time variable. It also has the advantage that the residuum $f_L$, which is a martingale,
can be efficiently and precisely estimated by It\^{o} calculus. 
This relies on the characterization of the
quadratic variation of the (infinite-dimensional) Gaussian driver.
\end{abstract}

\section{Definition of the proxies via It\^{o} SDEs}

For the set-up, bibliography, and the notation, we refer the reader to \cite{CMOW}.

\medskip

In this note we adopt a differential-geometric language and basis-free notation:
Since the generic coordinate direction $\xi$ acts as a linear form on $\mathbb{R}^2$,
i.~e. a {\it cotangent} vector, we combine the two (standard) scalar corrector
fields $\{\phi^i\}_{i=1,2}$ to a {\it tangent} vector field $\phi$, 
which amounts to\footnote{in this note, we use Einstein's notation in the strict form:
We add over repeated indices provided one is a super and the other a sub-script}
$\phi=\phi^ie_i$, where the vectors $\{e_i\}_{i=1,2}$ form the Cartesian basis.
Hence the contraction $\xi.\phi$ yields the corrector in direction $\xi$.
Thinking of $\nabla$ as the differential, which applied to a scalar field 
yields a one-form, that is, a cotangent vector field,
we interpret $\nabla\phi$ as a field of endomorphisms of cotangent space.
Interpreting also $a$ as such a field, the product $a\nabla\phi$ makes sense.
%as the occurance of the Euclidean inner product $\cdot$ indicates, 
%we interpret the divergence $\nabla\cdot$ as taking in a cotangent vector field 
%and rendering a scalar field.
%In this language, the corrector in direction of the cotangent vector $ \xi $ is given by the contraction $ \xi . \phi $. Furthermore, we think of $ a $ and $ J $ as endomorphism of cotangent space, so that {(...)} is conveniently expressed in form
%$$
%- \nabla \cdot a ( { \rm Id } + \nabla \phi ) = 0,
%$$
%where the gradient $ \nabla \phi $ is naturally identified with an endomorphism of cotangent space and the product of $ a $ and $ { \rm Id } + \nabla \phi $ is understood as a composition of such endomorphisms.}
%\section{Definition of the proxys via SDEs}\label{sec:proxies}

\medskip

Recall that the stream functions $\psi_L$'s are coupled such that the process
$[1,\infty)\ni L\mapsto\psi_L$ has independent increments. 
In view of $\mathbb{E}\psi_L^2=\ln L$, see \cite[(13)]{CMOW}, 
$[0,\infty)\ni\ln L=:s\mapsto\psi_L$ acts like Brownian motion,
with values in the space of 
%In particular, the increments are independent Gaussians
(smooth) stationary Gaussian scalar fields.
In terms of this continuum variable $s$, the discrete relation \cite[(49)]{CMOW}
for the proxy $\tilde\lambda$ to the effective diffusivity turns into
\begin{align}\label{wr08}
d\tilde\lambda
=\frac{\varepsilon^2}{2\tilde\lambda}ds\quad\mbox{and}\quad\tilde\lambda|_{s=0}=1
\end{align}
in the limit $M \searrow 1$ (see \cite[(49)]{CMOW}).
%, so that $\tilde\lambda=\sqrt{1+\varepsilon^2 s}$.
%see also {(51)} for the limit $M\downarrow 1$. 
In view of the Brownian character of $s\mapsto\psi_L$, 
its distributional derivative/infinitesimal increment $s\mapsto d\psi$ acts like 
(stationary, function valued, and Gaussian) white noise (in $s$).
%It is convenient to think in terms of the
%infinitesimal increment $s\mapsto d\psi$, which is a (function-valued and Gaussian) white noise (in $ s $).
In line with \cite[(38)]{CMOW}, we define the (stationary and vector-field valued)
white noises $d\phi$ and $d\sigma$ via the Helmholtz decomposition
\begin{align}\label{wr01}
\tilde\lambda \nabla d\phi+d\psi J=J\nabla d\sigma\quad\mbox{and}\quad
\mathbb{E}d\phi=\mathbb{E}d\sigma=0.
\end{align}
As discussed above, the white noises $\nabla d\phi$ and $\nabla d\sigma$ have values
in the space of fields of 
endomorphisms of cotangent space, so that provided we interpret $J$ as the 
counter-clockwise rotation by $\frac{\pi}{2}$ on (co)tangent space, 
(\ref{wr01}) makes sense.
%makes sense. Here and in the sequel, $ J $ denotes the counterclockwise rotation by $ \frac{\pi}{2} $, both on the tangent and cotangent space of $ \mathbb{R}^2 $, see {(10)} for the matrix representation.

\medskip

Equipped with the vectorial driver\footnote{in the jargon of SDEs} $d\phi$,
we now define the vectorial proxy corrector $\tilde\phi$ via an SDE in Itô form
that encapsulates the two-scale expansion:
\begin{align}\label{wr04}
d\tilde\phi=\varepsilon (1+\tilde\phi^i\partial_i)d\phi,\quad\tilde\phi|_{s=0}=0.
\end{align}
On the basis of the vectorial driver $d\sigma$ and the original 
scalar driver $d\psi$ we define the proxy flux corrector $\tilde\sigma$ via  
%and the vectorial drivers $d\phi$ and $d\sigma$, 
%we may now define the vector-valued proxy corrector and flux corrector
%$ (\tilde\phi,\tilde\sigma) $}
%,and $f$ 
%as solutions of SDEs in Itô form. 
%In line with \cite[(36) \& (37)]{CMOW}, we impose the tangent-vector valued
%the vector-valued $\tilde\phi$ is defined via
%
\begin{align}\label{wr02}
d\tilde\sigma= \varepsilon ( d\sigma+\tilde\sigma^i\partial_id\phi+d\psi\,\tilde\phi )
+ J\tilde\phi  \, \frac{\varepsilon^2}{2\tilde\lambda}ds, 
\quad\tilde\sigma|_{s=0}=0.
\end{align}
Definitions (\ref{wr04}) \& (\ref{wr02}) correspond to \cite[(36) \& (37)]{CMOW},
the motivation for (\ref{wr02}) lies in (\ref{wr03}) below.
Note that in view of the drift term in (\ref{wr02}), $\tilde\sigma$ is
not a martingale, as opposed to $\tilde\phi$. 

\medskip

As in \cite{CMOW}, we measure the quality of the proxies $\tilde\phi$ and $\tilde\sigma$
in terms of the residuum $f$ in
\begin{align}\label{wr07}
a({\rm id}+\nabla\tilde\phi)=\tilde\lambda\,{\rm id}+J\nabla\tilde\sigma+f,
\end{align}
where we recall the definition of $a$ from \cite[(11)]{CMOW}
\begin{align}\label{wr09}
a={\rm id}+\varepsilon\psi J\quad\mbox{with}\quad \psi|_{s=0}=0.
\end{align}
In this note, we give a self-contained proof for\footnote{what amounts to
a first-order error estimate in $\varepsilon$}

\begin{proposition}[{see \cite[(79)]{CMOW}}]\label{proposition}
%	The residuum $ f $ defined by \eqref{wr03} satisfies
%
	For $\varepsilon^2\ll 1$ it holds
	$$
%	\E f = 0
%	\quad { \rm and } \quad
	\E | f |^2 \lesssim \varepsilon^2 \tilde\lambda.
	$$
\end{proposition}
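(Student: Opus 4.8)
The plan is to derive a closed stochastic differential equation for the residuum $f$ by applying Itô's formula to the defining relation (\ref{wr07}), and then estimate the resulting quadratic variation term by term. First I would differentiate both sides of $a({\rm id}+\nabla\tilde\phi)=\tilde\lambda\,{\rm id}+J\nabla\tilde\sigma+f$ using the Itô product rule, being careful that $a$, $\tilde\phi$, $\tilde\sigma$ all carry noise in $s$, so cross-variation terms $d\langle a,\nabla\tilde\phi\rangle$ appear. Plugging in (\ref{wr08}), (\ref{wr04}), (\ref{wr02}), (\ref{wr09}) and the Helmholtz relation (\ref{wr01}), the martingale parts should cancel by design — this cancellation is precisely what motivates the choice of drift in (\ref{wr02}) — leaving $df$ as a pure drift (bounded-variation) term of order $\varepsilon^2\,ds$ plus possibly a small martingale part coming from the mismatch in the second-order Itô corrections. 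Concretely I expect something of the schematic form $df = \varepsilon^2(\text{quadratic in }\tilde\phi,\nabla\tilde\phi,\text{ and the noise coefficients})\,ds + (\text{lower-order martingale})$, with $f|_{s=0}=0$.

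Next I would take the annealed second moment. Since we are after $\E|f|^2$, I would apply Itô's formula to $|f|^2$, so that $d\,\E|f|^2 = 2\,\E\langle f, df_{\rm drift}\rangle\,ds + \E|df_{\rm mart}|^2$ (quadratic variation of the martingale part). Using Cauchy–Schwarz on the first term and a Grönwall-type argument, the bound $\E|f|^2\lesssim\varepsilon^2\tilde\lambda$ should follow provided (i) the drift of $f$ is controlled by $\varepsilon\cdot(\text{something of size }\sqrt{\E|f|^2})$ plus $\varepsilon^2$ times a quantity of size $\tilde\lambda$ or smaller, and (ii) the quadratic variation of the martingale part of $f$ is itself $\lesssim\varepsilon^4\,ds$ or $\varepsilon^2\,d\tilde\lambda$-type, which integrates to something dominated by $\varepsilon^2\tilde\lambda$. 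Here I would lean on the first-order estimates for $\tilde\phi$, $\nabla\tilde\phi$, $\tilde\sigma$, $\nabla\tilde\sigma$ from \cite{CMOW} — namely that $\E|\nabla\tilde\phi|^2$, $\E|\tilde\phi|^2$ (appropriately localized or in the stationary sense) are $O(\varepsilon^2\ln L)=O(\varepsilon^2\tilde\lambda)$ or better — together with the fact that $\tilde\lambda$ grows like $\sqrt{1+\varepsilon^2 s}$ from (\ref{wr08}), so $d\tilde\lambda = \frac{\varepsilon^2}{2\tilde\lambda}ds$.

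The main obstacle will be the careful bookkeeping of the Itô cross-variation terms: the relation (\ref{wr01}) couples $d\phi$, $d\sigma$, $d\psi$ through $\tilde\lambda$, and the SDEs (\ref{wr04}), (\ref{wr02}) feed $\tilde\phi$-dependent coefficients back into these drivers, so the quadratic variation of, say, $a\nabla\tilde\phi$ involves $d\langle\psi J,(1+\tilde\phi^i\partial_i)d\phi\rangle$, which must be evaluated using the characterization of the quadratic variation of the infinite-dimensional Gaussian driver $d\psi$ (the point emphasized in the abstract). Getting the right structural identities — e.g. that $\nabla d\psi$ contracted against the various $J$-rotated fields produces exactly the terms needed to kill the leading-order residuum — is where the real work lies; once the $df$ equation is in hand with the correct quadratic structure, the second-moment estimate is a routine Grönwall argument. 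I would also need to confirm that the nonlinear term $\tilde\phi^i\partial_i d\phi$ in (\ref{wr04}) contributes only higher-order corrections to $\E|f|^2$, which should follow from the smallness of $\E|\tilde\phi|^2$ relative to $1$ when $\varepsilon^2\ll 1$, possibly with logarithmic factors absorbed into $\tilde\lambda$.
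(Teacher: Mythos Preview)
Your plan contains a structural misconception that would derail the argument. You expect the martingale parts to cancel and $df$ to be essentially a drift of order $\varepsilon^2\,ds$. The opposite is true: the drift term in (\ref{wr02}) is chosen precisely so that the \emph{It\^o correction (drift) terms} cancel in the differential of (\ref{wr07}), and $f$ ends up a \emph{pure martingale}. The actual SDE is (\ref{wr03}),
\[
df=\varepsilon\big(f\,\nabla d\phi+(\tilde\phi^i a-\tilde\sigma^i J)\nabla\partial_i d\phi-(J\nabla d\psi)\otimes\tilde\phi\big),\qquad f|_{s=0}=0,
\]
with no $ds$-term at all. Consequently $d\,\E|f|^2=\E[\tr df^*df]$ is governed entirely by quadratic variation, and the inequality one obtains is (\ref{add07}):
\[
d\,\E|f|^2-\varepsilon^2\E|f|^2\,\frac{ds}{2\tilde\lambda^2}\le \varepsilon^2\,\E|\tilde\phi\otimes a-\tilde\sigma\otimes J|^2\,\frac{ds}{2L^2\tilde\lambda^2}+\varepsilon^2\,\E|\tilde\phi|^2\,\frac{2\,ds}{L^2}.
\]
The factor $1/L^2$ on the right is the crux you are missing: it comes from the extra derivative in the drivers $\nabla\partial_i d\phi$ and $\nabla d\psi$, exploiting that $d\psi$ is Fourier-supported on $|k|=1/L$ (so $-\triangle d\psi=L^{-2}d\psi$). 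Without this mechanism there is nothing to balance the growth of the correctors.

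Your anticipated corrector bounds are also off. The paper shows $\E|\tilde\phi|^2\lesssim \varepsilon^2 L^2/\tilde\lambda^2$, $\E|\tilde\sigma|^2\lesssim\varepsilon^2 L^2$, and $\E|\tilde\phi|^4\lesssim \varepsilon^4 L^4/\tilde\lambda^4$; these grow like $L^2$ (resp.\ $L^4$), not like $\varepsilon^2\ln L$. It is exactly the cancellation of this $L^2$ against the $1/L^2$ in the quadratic-variation identities (\ref{wr26}), (\ref{wr32}) that leaves a right-hand side of size $\varepsilon^4\,ds/\tilde\lambda^2$. Only then does the Gr\"onwall step (with integrating factor $\tilde\lambda^{-1}$, since $d\tilde\lambda^2=\varepsilon^2\,ds$) yield $\E|f|^2\lesssim\varepsilon^2\tilde\lambda$. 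So while your closing Gr\"onwall instinct is right, the inputs you would feed into it---a drift-dominated $df$ and corrector moments of size $\varepsilon^2\tilde\lambda$---are both incorrect and would not close.
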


In preparation, we will establish in the next section 
that (\ref{wr07}) is consistent with the 
tensor-valued\footnote{Note that as a tensor product of a cotangent vector and a tangent vector,
the last term in (\ref{wr03}) canonically is an endomorphism of cotangent vector space.} SDE
%Finally, in view of {(50)},
%the tensor-valued $f$ is defined via
%
\begin{align}\label{wr03}
df= \varepsilon\big( f \nabla d\phi+(\tilde\phi^ia-\tilde\sigma^iJ)\nabla\partial_id\phi
-(J\nabla d\psi)\otimes\tilde\phi\big) ,\quad f|_{s=0}=0,
\end{align}
see \cite[(50)]{CMOW}.
%{is the starting point for Proposition \ref{proposition}}. 
Note that \eqref{wr03} shows that the martingale $f$ has vanishing expectation, 
which was part of its definition in \cite[(48)]{CMOW}. This implies the exact consistency
$\mathbb{E}a({\rm id}+\nabla\tilde\phi)$ $=\tilde\lambda{\rm id}$,
which in \cite{CMOW} only holds in the limit $M\searrow1$, see \cite[(51)]{CMOW}.

\medskip

The approach of a continuum decomposition of an underlying Gaussian field,
often the Gaussian free field $\psi$ like here,
according to spatial scales $L$ is also used in quantum field theory
(where the underlying noise is thermal instead of quenched/environmental),
and is known as variance decomposition.
For instance, \cite{BG} use it to control (exponential) moments of the
field $\phi$ under consideration,
with help of suitable martingales in $\ln L$, analogous to $\tilde\phi$.
The variance decomposition also induces a
Hamilton-Jacobi-type evolution equation for the effective Hamiltonian in $\ln L$,
known as Polchinski equation. In \cite[Section 2.3]{GM}, to cite another recent
work, proxies for the solution of the Polchinski equation are constructed by truncation
of an expansion (analogous to an expansion in $\varepsilon$ here),
and the evolution of the residuum is monitored, like $f$ in (\ref{wr03}).

%where we recall the definition of $a$ from {(11)}
%
%\begin{align}\label{wr09}
%a={\rm id}+\varepsilon\psi J\quad\mbox{with}\quad \psi|_{s=0}=0.
%\end{align}
%
%In (\ref{wr03}), the first r.~h.~s.~term is to be interpreted as the composition
%of two endomorphisms. 
%Like $\tilde\phi$, $f$ is a martingale. 
%As in the discrete approach above, these definitions naturally lead to the decomposition
%
%\begin{align}\label{wr07}
%a({\rm id}+\nabla\tilde\phi)=\tilde\lambda\,{\rm id}+J\nabla\tilde\sigma+f.
%\end{align}
%
%This decomposition is proved in the next section.

%%%%%%%%%%%%%%%%%%%%%%%%%%%%%%%%%%%%%%%%%%%%%%%%%%%%%%%%%%%%%%%%%%%%%%%%%%%%%%%%%%%%%%%%%%%%%%

\section{Proof of (\ref{wr07}) by identification of quadratic variations}\label{sec:r-sde}

We show how the SDEs (\ref{wr04}), (\ref{wr02}), and (\ref{wr03}) imply the identity (\ref{wr07}). 
To this end, we monitor the tensor-field valued residuum
\begin{align}\label{wr12}
r := a({\rm id}+\nabla\tilde\phi)-\tilde\lambda\,{\rm id}-J\nabla\tilde\sigma-f
\end{align}
and shall establish the homogeneous SDE
\begin{align}\label{wr11}
dr= \varepsilon r\nabla d\phi,\quad r|_{s=0}=0.
\end{align}
The second item in (\ref{wr11}) follows directly from the second items in 
(\ref{wr08}), (\ref{wr04}), (\ref{wr02}), (\ref{wr03}), and (\ref{wr09}).

\medskip

In order to establish the first item in (\ref{wr11}), and in view of (\ref{wr12}), 
we need to identify the infinitesimal increment of the flux $a({\rm id}+\nabla\tilde\phi)$, 
which is a bilinear term in the drivers and thus involves a quadratic variation term. We claim that
%which because of the product $ a ( { \rm id } + \nabla \tilde\phi ) $ in (\ref{wr12}) on the l.~h.~s.~involves
%a quadratic variation term. We start with the identification of this term. 
%
\begin{align}\label{wr16}
d\big(a({\rm id}+\nabla\tilde\phi)\big)
=\varepsilon (-\tilde\lambda\nabla d\phi+J\nabla d\sigma)({\rm id}+\nabla\tilde\phi)
%d\psi J({\rm id}+\nabla\tilde\phi)
+a\nabla d\tilde\phi
+\varepsilon^2({\rm id}-J\nabla\tilde\phi J)\frac{ds}{2\tilde\lambda}.
\end{align}
Indeed, applying $\nabla$
%$ = e^i \partial_i $ 
to (\ref{wr04})
%, where $\{e_j\}_{j=1,2} $ denotes the
%Cartesian basis of tangent space and $\{e^j\}_{j=1,2}$ the dual basis on cotangent 
%space\footnote{This is consistent with the convention that $ \tilde\phi $ is a 
%tangent vector and its components have upper indices, i.~e.~$ \tilde\phi = \tilde\phi^i e_i $. 
%{In particular $ \tilde\phi^i = e^i . \tilde\phi $ 
%for the tangent vector $ \tilde\phi^i $, and $ J = J_i^j \, e_j \otimes e^i $ 
%for the endomorphism $ J $. 
%The latter is equivalent to $ J v = v . e_j J_i^j e^i $ for any cotangent vector $ v $.}},
and appealing to Leibniz' rule in conjunction with 
the identity\footnote{where the latter is understood as a composition of endomorphisms} 
$(\nabla \tilde\phi^i) \otimes \partial_i d \phi$ $=\nabla\tilde\phi\nabla d\phi$, we obtain
\begin{align}\label{wr15}
d\nabla\tilde\phi=\varepsilon ({\rm id}+\nabla\tilde\phi)\nabla d\phi+ \varepsilon \tilde\phi^i\nabla\partial_id\phi.
\end{align}
In view of (\ref{wr09}), the main task is to identify the infinitesimal
increment of $\varepsilon\psi J\nabla\tilde\phi$;
by It\^{o} calculus this involves two covariations of the drivers:
\begin{align}\label{wr10}
d\big(\varepsilon \psi J\nabla\tilde\phi\big)
= \varepsilon d\psi J\nabla\tilde\phi+ \varepsilon \psi J\nabla d\tilde\phi
+\varepsilon^2 J({\rm id}+\nabla\tilde\phi)[d\psi \, \nabla d\phi]
+\varepsilon^2 \tilde\phi^i[d\psi \, \nabla\partial_id\phi].
\end{align}
These covariations of drivers are deterministic by the independence of increments
and constant in space by stationarity. Since $d\psi$ is invariant in law under point reflection,
$d\phi$ is (jointly) odd under the same operation in view of (\ref{wr01}),
and thus also $\nabla\partial_id\phi$. 
Hence the last contribution to (\ref{wr10}) vanishes,~i.~e.
\begin{align}\label{wr13}
[d\psi \, \nabla\partial_id\phi]=0.
\end{align}

\medskip

Thus the task is to identify the (deterministic) endomorphism $B$ (of cotangent space) in
\begin{equation}\label{cw14a}
[d\psi \, \nabla d\phi]=Bds.
\end{equation}
%
%$ \varepsilon^2 [d\psi \, \nabla d\phi] $ in \eqref{wr10}; 
%by {stationarity}, $B$ is a spatially constant endomorphism (of cotangent space).
% {that we denote by
%
%\begin{equation}\label{cw14a}
%B d\tilde\lambda^2 := \varepsilon^2 [d\psi \, \nabla d\phi].
%\end{equation}}
Since the scalar $d\psi$ is invariant in law under rotations, 
the endomorphism $\nabla d\phi$ is invariant in law under conjugation with rotations.
This transmits to $B$, which amounts to
\begin{align}\label{cw14}
B=\;\mbox{scalar} \; \times \; \mbox{rotation}.
\end{align}

\medskip
 
In order to identify $B$, we derive local relations between the drivers 
$d\phi$ and $d\psi$ from (\ref{wr01}):
\begin{align}\label{wr21}
{\rm tr}\nabla d\phi=0\quad\mbox{and}\quad
\tilde\lambda {\rm tr}\nabla d\phi J=d\psi.
\end{align}
To this purpose, we consider the $i$-th component of (\ref{wr01}), which
amounts to applying this endomorphism identity 
to the cotangent vector $e^i$ (where $\{e^i\}_{i=1,2}$ denotes 
the dual basis) and yields $\tilde\lambda\nabla d\phi^i+d\psi Je^i$
$=J\nabla d\sigma^i$. Applying the divergence\footnote{where for a cotangent vector $\xi$,
$\xi\cdot$ denotes the corresponding tangent vector} 
$\nabla\cdot= \partial_j e^j \cdot $ and using $\nabla\cdot\nabla=\triangle$ gives 
$\tilde\lambda\triangle d\phi^i+\partial_j d\psi(e^j\cdot Je^i)=0$.
In view of $d\phi^ie_i=d\phi$ and the skewness of $J$ in form of
$(e^j\cdot Je^i)e_i = - ( J e^j \cdot e^i ) e_i =-(Je^j)\cdot$, 
%where we now interpret $J$ as the counter-clockwise rotation on tangent space, 
we obtain the tangent-vector identity
%where $\{e^j\}_{j=1,2}$ is the dual basis of cotangent space,
%to the $i$-th component of (\ref{wr01}), that is naturally identified with a cotangent vector and given by 
%$\tilde\lambda\nabla d\phi^i+ d\psi  J^i_j e^j =J\nabla d\sigma^i$.
%This yields $\tilde\lambda\triangle d\phi^i+\partial_j d\psi J^i_j =0$. Recall that we use the symbol $ J $ to denote a rotation by $ \frac{\pi}{2} $ on both tangent and cotangent space so that this assumes the vectorial form
%
\begin{align}\label{wr30}
\tilde\lambda\triangle d\phi=%=\partial_jd\psi(Je^j)\cdot
(J\nabla d\psi)\cdot.
\end{align}
Applying $\nabla=\partial_ie^i$, and noting that 
$e^i\otimes\partial_i\nabla d\psi\cdot$ coincides with the
Hessian endomorphism $\nabla^2d\psi$ of cotangent space, we obtain once more by skewness of $J$
the tensor-valued identity
\begin{align}\label{wr24}
\tilde\lambda\triangle \nabla d\phi=-\nabla^2 d\psi J.
\end{align}
Taking the trace and appealing to the symmetry of $\nabla^2 d\psi$ on the one hand, 
and multiplying with $J$ from the right, using $J^2=-{\rm id}$, 
and then taking the trace and appealing to
${\rm tr}\nabla^2=\triangle$ on the other hand gives 
\begin{align*}
\triangle {\rm tr}\nabla d\phi=0\quad\mbox{and}\quad
\triangle(\tilde\lambda{\rm tr}\nabla d\phi J-d\psi)=0.
\end{align*}
Since both expressions under $\triangle$ are stationary fields of vanishing expectation, 
this yields (\ref{wr21}).

\medskip

Because of the underlying white-noise type normalization 
\begin{align}\label{wr31}
[d\psi \, d\psi]=ds,
\end{align}
on the level of \eqref{cw14a}, and the locality\footnote{in differential geometric jargon} of the covariation seen as a bilinear form, (\ref{wr21}) translates into
\begin{align*}
{\rm tr}B=0\quad\mbox{and}\quad
\tilde\lambda{\rm tr}BJ=1,
\end{align*}
which in view of (\ref{cw14}) finally implies $B = - \frac{1}{2 \tilde\lambda} J$.
We thus have identified \eqref{cw14a}:
\begin{align}\label{wr14}
[d\psi\,\nabla d\phi]%= B {ds} 
=-J\frac{ds}{2\tilde\lambda}.
\end{align}
Inserting (\ref{wr13}) and (\ref{wr14}) into (\ref{wr10}) yields
\begin{align*}
d\big( \varepsilon \psi J\nabla\tilde\phi\big)
= \varepsilon d\psi J\nabla\tilde\phi+ \varepsilon \psi J\nabla d\tilde\phi
+\varepsilon^2({\rm id}-J\nabla\tilde\phi J)\frac{ ds }{2\tilde\lambda},
\end{align*}
which by (\ref{wr01}) and (\ref{wr09}) turns into (\ref{wr16}).

\medskip

Equipped with (\ref{wr16}) we finally turn to (\ref{wr11}).
We now argue that the first part of the drift term in (\ref{wr16}) is balanced by (\ref{wr08})
whereas the second part is compensated by (\ref{wr02}). Indeed, applying $\nabla$ to
(\ref{wr02}) we obtain because of $\nabla\tilde\sigma^i\otimes \partial_id\phi = \nabla\tilde\sigma \nabla d\phi$ and $ \nabla ( J \tilde\phi ) = \nabla \tilde\phi J^* $, where $ J^* $ denotes the adjoint endomorphism w.~r.~t.~the Euclidean inner product on the (co)tangent space,
\begin{align}\label{wr17}
dJ\nabla\tilde\sigma= \varepsilon ( J\nabla d\sigma+J\nabla\tilde\sigma\nabla d\phi+ 
\tilde\sigma^i J\nabla\partial_id\phi+J\nabla d\psi\otimes\tilde\phi+d\psi J\nabla\tilde\phi )
- \varepsilon^2 J\nabla\tilde\phi J \, \frac{d s}{2\tilde\lambda}.
\end{align}
Applying $d$ to (\ref{wr12}) and inserting (\ref{wr08}), (\ref{wr15}), (\ref{wr16}), 
and (\ref{wr17}) yields
\begin{align*}
dr & = -\varepsilon \tilde\lambda (\nabla d\phi)({\rm id} + \nabla\tilde\phi) + \varepsilon  J(\nabla d\sigma)\nabla\tilde\phi
+ \varepsilon a({\rm id}+\nabla\tilde\phi)\nabla d\phi\\
& + \varepsilon \tilde\phi^ia\nabla\partial_id\phi
- \varepsilon J\nabla\tilde\sigma\nabla d\phi- \varepsilon \tilde\sigma^i J \nabla\partial_id\phi
- \varepsilon J\nabla d\psi\otimes\tilde\phi- \varepsilon d\psi J\nabla\tilde\phi-df,
\end{align*}
which by definition (\ref{wr01}) of $(\nabla d\phi,\nabla d\sigma)$ in terms of $d\psi$
simplifies to
\begin{align*}
dr&=- \varepsilon \tilde\lambda\nabla d\phi
+ \varepsilon a({\rm id}+\nabla\tilde\phi)\nabla d\phi\\
& + \varepsilon \tilde\phi^ia\nabla\partial_id\phi
- \varepsilon J\nabla\tilde\sigma\nabla d\phi
- \varepsilon J\tilde\sigma^i\nabla\partial_id\phi
- \varepsilon J\nabla d\psi\otimes\tilde\phi-df.
\end{align*}
This motivates the form of (\ref{wr03}) that compensates the terms
that have an additional derivative on the drivers $\nabla d\phi$ and $d\psi$:
\begin{align*}
dr= \varepsilon ( -\tilde\lambda {\rm id}
+a({\rm id}+\nabla\tilde\phi)-J\nabla\tilde\sigma-f )\nabla d\phi,
\end{align*}
which by definition (\ref{wr12}) of $ r $ turns into the desired identity (\ref{wr11}).

%%%%%%%%%%%%%%%%%%%%%%%%%%%%%%%%%%%%%%%%%%%%%%%%%%%%%%%%%%%%%%%%%%%%%%%%%%%%%%%%%%%%%%%%%%%%%%%

\section{Proof of estimates by It\^{o} calculus}\label{sec:proofs}

Lemma 2 from \cite{CMOW} provides an iteration formula for $ f_{L+} $ that is used to estimate 
$ \E | f_L |^2 $. In the continuum approach, based on the SDE (\ref{wr03}), 
we use It\^{o} calculus to derive the differential inequality% on $\mathbb{E}|f_L|^2$
\begin{equation}\label{add07}
d\mathbb{E}|f|^2 - \varepsilon^2  \mathbb{E} |f|^2 \frac{ ds }{2\tilde\lambda^2}
\leq \varepsilon^2  \E|\tilde\phi\otimes a-\tilde\sigma\otimes J|^2 \frac{ ds}{2L^2 \tilde\lambda^2} 
+ \varepsilon^2 \E|\tilde\phi|^2 \frac{2 ds}{L^2}.
\end{equation}
Here, $|f|^2$ stands for the squared Frobenius norm given by $|f|^2:=\tr f^* f $. Likewise, 
$|\tilde\phi \otimes a - \tilde\sigma \otimes J|^2$ denotes the squared 
Frobenius norm of that 3-tensor.

\begin{proof}[Proof of {\cite[Lemma 2]{CMOW}} revisited.]
%\textit{Step 1: Derivation of (\ref{add07}).}
Since $f$ is a martingale, its quadratic variation determines the evolution of 
$\mathbb{E}|f|^2$, the expectation of a quadratic expression in $f$. 
Because of the specific form of the latter, 
this involves a specific contraction of the 4-tensor describing all covariations,
%Since we are just interested in the expectation of $ | f |^2 $, we only need to monitor a specific contraction of the covariation of $ f $ and $ f^* $, namely
%
\begin{align}\label{wr34}
d \E | f |^2 = \E [ \tr df^* df ],
\end{align}
which will be useful. Turning to (\ref{wr03}), we note that the parity argument
used for (\ref{wr13}) yields
\begin{align*}
[\nabla d\phi \, (\nabla\partial_i d\phi)^*]=0\quad\mbox{and}\quad
[\nabla d\phi \, (\nabla d\psi\otimes e_i)^*]=0.
\end{align*}
Hence the r.~h.~s.~of (\ref{wr34}) splits
\begin{align*}
\lefteqn{d\E|f|^2=\varepsilon^2 \E [ \tr ( f \nabla d \phi )^* ( f \nabla d \phi ) ]}\nonumber\\
&+\varepsilon^2 \E [  \tr ( (\tilde\phi^i a - \tilde\sigma^i J) 
\nabla \partial_i d \phi - {\tilde\phi^i}J\nabla d\psi\otimes{e_{i}} )^* 
( (\tilde\phi^{j} a - \tilde\sigma^{j}J) 
\nabla \partial_{j} d \phi - {\tilde\phi^j}J \nabla d\psi\otimes 
{e_{j}} ) ].
\end{align*}
By Cauchy-Schwarz applied to the inner product $\mathbb{E} [ {\rm tr} f^*g ]$, 
we may further split the last term:
\begin{align*}%\label{add04}
%\begin{aligned}
d \E | f |^2
& \leq \varepsilon^2 \E [ \tr ( f \nabla d \phi )^* ( f \nabla d \phi ) ] + 2 \varepsilon^2 \E [ \tr ( (\tilde\phi^i a - \tilde\sigma^i J) \nabla \partial_i d \phi )^* 
( (\tilde\phi^{j} a - \tilde\sigma^{j} J) 
\nabla \partial_{j} d \phi ) ] \nonumber \\
& + 2 \varepsilon^2 \E [ \tr ( J \nabla d\psi \otimes \tilde\phi )^* ( J \nabla d\psi \otimes \tilde\phi ) ].
%\end{aligned}
\end{align*}
Using the algebraic identities ${\rm tr}(ff')^*ff'={\rm tr}f^*ff'f'^*$,
$(\alpha^i{\rm id}+\beta^iJ)^*(\alpha^j{\rm id}+\beta^jJ)$ 
$=(\alpha^i\alpha^j+\beta^i\beta^j){\rm id}$ $+(\alpha^i\beta^j-\alpha^j\beta^i)J$, and 
${\rm tr}(\xi\otimes\phi)^*(\xi\otimes\phi)$ $=|\xi|^2|\phi|^2$, we isolate
the quadratic variations of the drivers:
\begin{align*}%\label{add04}
d\E|f|^2 &\leq \varepsilon^2 \E\tr f^*f [\nabla d \phi(\nabla d \phi )^*] \\
&+2\varepsilon^2\E\big(\tilde\phi^i\tilde\phi^j
 +(\varepsilon\psi\tilde\phi^i-\tilde\sigma^i)(\varepsilon\psi\tilde\phi^j-\tilde\sigma^j)\big)
%\tr(\tilde\phi^i a - \tilde\sigma^i J)^*(\tilde\phi^j a - \tilde\sigma^j J)
{\rm tr}[\nabla \partial_j d \phi (\nabla \partial_i d \phi )^*]
+2\varepsilon^2 \E |\tilde\phi|^2 [\nabla d\psi \cdot \nabla d\psi ],
\end{align*}
where in the second r.~h.~s.~term, the skew-symmetric part in $(i,j)$ drops
out because $[\nabla \partial_j d \phi (\nabla \partial_i d \phi )^*]$ is symmetric
in $(i,j)$ by stationarity\footnote{As a bilinear form the covariation satisfies a Leibniz rule. Since the covariation under consideration is deterministic and by stationarity constant in space, this implies that we can integrate by parts.}. 
By isotropy, the symmetric endomorphism 
$[\nabla d \phi(\nabla d \phi )^*]$ is a multiple of the identity;
likewise, ${\rm tr}[\nabla \partial_j d \phi (\nabla \partial_i d \phi )^*]$ is
a multiple of $\delta_{ij}$. Therefore, and using
\begin{equation}\label{fosw01}
2(|\tilde\phi|^2
+|\varepsilon\psi\tilde\phi-\tilde\sigma|^2) =|\tilde\phi\otimes a-\tilde\sigma\otimes J|^2,
\end{equation} 
the above inequality collapses to
\begin{align*}%\label{add04}
d\E|f|^2 &\leq \frac{\varepsilon^2}{2} 
\E|f|^2 [{\rm tr}\nabla d \phi(\nabla d \phi )^*]\nonumber\\
&+\frac{\varepsilon^2}{2}\E|\tilde\phi\otimes a-\tilde\sigma\otimes J|^2
%\tr(\tilde\phi^i a - \tilde\sigma^i J)^*(\tilde\phi^j a - \tilde\sigma^j J)
{\textstyle \sum_{i=1}^2}[{\rm tr}\nabla \partial_i d \phi (\nabla \partial_i d \phi )^*]
+2\varepsilon^2 \E |\tilde\phi|^2 [\nabla d\psi \cdot \nabla d\psi ].
\end{align*}
Hence by (\ref{wr08}) in form of 
\begin{align}\label{wr37}
d\tilde\lambda^2=\varepsilon^2ds,
\end{align}
(\ref{add07}) follows once we identify the following quadratic variations
%We will treat the terms in (\ref{add04})
%
\begin{align}
\tilde\lambda^2[{\rm tr}\nabla d\phi \, (\nabla d\phi)^*]&=ds,\label{wr18}\\
\tilde\lambda^2 L^2 {\textstyle \sum_{i=1}^2}
[{\rm tr}\nabla\partial_id\phi \, (\nabla\partial_id\phi)^*]
& =ds,\label{wr26}\\
L^2[\nabla d\psi\cdot\nabla d\psi] &=ds.\label{wr32}
\end{align}
\ignore{%%%%%%%%%%%%BEGIN IGNORE%%%%%%%%%%%%%%%%%%%%%%%%%%%%%%%%%%%%%%%%%%%%%%%%%%%%
that will be proven later on. {$e^i\otimes e^j$ did not make sense since
not an endomorphism of co-tangent space; also, one should not convert upper and lower indices
without a meaning}

\medskip

With help of (\ref{wr18}) we compute the first r.~h.~s.~term {of} (\ref{add04}):
$$
\varepsilon^2 \E [ \tr ( f \nabla d \phi )^* ( f \nabla d \phi ) ]
= \varepsilon^2 \E \tr f^* f [ \nabla d\phi (\nabla d\phi)^* ]
= \E |f|^2 \frac{d \tilde\lambda^2}{2 \tilde\lambda^2}.
$$
For the last two terms in (\ref{add04}) we use (\ref{wr26}) to obtain
$$
\varepsilon^2 \E [ \tr ( (\tilde\phi^i a - \tilde\sigma^i J) \nabla \partial_i d \phi )^* ( (\tilde\phi^i a - \tilde\sigma^i J) \nabla \partial_i d \phi ) ]
\lesssim \E | \tilde\phi \otimes a - \tilde\sigma \otimes J |^2 \frac{ d\tilde\lambda^2 }{\tilde\lambda^2 L^2}
$$
and
$$
\varepsilon^2 \E [ \tr ( J \nabla d\psi \otimes \tilde\phi )^* ( J \nabla d\psi \otimes \tilde\phi ) ]
= \varepsilon^2 \E | \tilde\phi |^2 [ J \nabla d \psi \cdot J \nabla d \psi ]
= \varepsilon^2 \E | \tilde\phi |^2 \frac{1}{L^2} [ d \psi \, d \psi ]
= \E | \tilde\phi |^2 \frac{d \tilde\lambda^2}{L^2}.
$$
The quadratic variation of $ \nabla d \psi $ may be computed explicitly from the covariance of $ \psi $. To avoid this computation one may use an alternative argument similar to (\ref{wr25prev}) and (\ref{wr25prev2}) that we present later on in the derivation of (\ref{wr25}). Together, the last three equations and (\ref{add04}) combine to (\ref{add07}).
}%%%%%%%%%%%%%%%%%%%%%END IGNORE%%%%%%%%%%%%%%%%%%%%%%%%%%%%%%%%%%%%%%%%%%%%%%%%%%%%%%%%

\medskip

%\textit{Step 2: Quadratic variation of $ \nabla \phi $.} 
We start by deriving (\ref{wr18}) from (\ref{wr14}).
By stationarity we have $[{\rm tr}\nabla d\phi(\nabla d\phi)^*]$ $=[d\phi\cdot (-\triangle) d\phi]$,
into which we insert (\ref{wr30}) to the effect of\footnote{recall that $ \xi. \phi $ denotes the
canonical pairing of a cotangent vector $\xi$ and a tangent vector $\phi$}
$\tilde\lambda [{\rm tr}\nabla d\phi(\nabla d\phi)^*]$ $=[J\nabla d\psi.d\phi]$.
Once more by stationarity this turns into $\tilde\lambda [{\rm tr}\nabla d\phi\,(\nabla d\phi)^*]$ 
$={\rm tr}J[d\psi\,\nabla d\phi]$, 
so that it remains to appeal to (\ref{wr14}) and $J^2=-{\rm id}$.

\medskip

We now argue that (\ref{wr32}) follows from (\ref{wr31}). Indeed, by stationarity
we have $[\nabla d\psi\cdot\nabla d\psi]=[d\psi \, (-\triangle)d\psi]$.
By definition of the increment $d\psi$, it has values in the space of stationary
fields which are Fourier supported on $L|k|=1$, to the effect of $L^2(-\triangle)d\psi=d\psi$.
Hence we obtain the desired 
\begin{align}\label{wr35}
L^2[\nabla d\psi\cdot\nabla d\psi]=[d\psi\,d\psi].
\end{align}
\ignore{%%%%%%%%%%%%%%BEGIN IGNORE%%%%%%%%%%%%%%%%%%%%%%%%%%%%%%%%%%%%%%%%%%%%%%%%%%%%%%%%%%
By stationarity and isotropy, the symmetric matrix $[\nabla d\phi (\nabla d\phi)^*]$
is constant and a multiple of the identity; hence we have
\begin{align}\label{wr19}
\frac{1}{2}({\rm tr}A){\rm id} \, d\tilde\lambda^2 
= A \, d \tilde\lambda^2
:= \varepsilon^2 [\nabla d\phi (\nabla d\phi)^*].
\end{align}
We note that for any endomorphism $B$ we have
$ {\rm tr}BB^* $ $=({\rm tr}B)^2$ $+({\rm tr}BJ)^2$ $-2{\rm det}B$, and thus
\begin{align}\label{wr20}
{\rm tr}A \, d\tilde\lambda^2 = \varepsilon^2 [{\rm tr}\nabla d\phi \, {\rm tr}\nabla d\phi]
+ \varepsilon^2 [{\rm tr}\nabla d\phi J \, {\rm tr}\nabla d\phi J]
+ 2 \varepsilon^2 [\nabla d\phi^1 \cdot J\nabla d\phi^2].
\end{align}
Since $[\nabla d\phi^1 \cdot J\nabla d\phi^2]$ 
$=\partial_2[d\phi^1 \,  \partial_1d\phi^2] - \partial_1[d\phi^1 \, \partial_2d\phi^2]$
vanishes by stationarity, we infer (\ref{wr18}) from (\ref{wr19}) followed by (\ref{wr20})
into which we insert (\ref{wr21}).
}%%%%%%%%%%%%%%%%%%%%%%%%%END IGNORE%%%%%%%%%%%%%%%%%%%%%%%%%%%%%%%%%%%%%%%%%%%%%%%%%%%%%
Finally, we deduce (\ref{wr26}) from (\ref{wr18}) by a similar argument: By
stationarity, we have $\sum_{i=1}^2$
$[{\rm tr}\nabla\partial_id\phi (\nabla\partial_id\phi)^*]$
$=[{\rm tr}\nabla d\phi \, (-\triangle\nabla d\phi)^*]$, the Fourier support
of $d\psi$ transmits to $\nabla d\phi$ via the defining equation (\ref{wr01}).
\end{proof}

\ignore{%%%%%%%%%%%%%%%%%%%BEGIN IGNORE%%%%%%%%%%%%%%%%%%%%%%%%%%%%%%%%%%%%%%%%%%%%%%%%%%
\textit{Step 3: Quadratic variation of $ \nabla^2\phi $.} To prove (\ref{wr26}), we introduce
\begin{align}\label{wr22}
A_{ijkl} d\tilde\lambda^2 = \varepsilon^2 [\partial_k\partial_i d\phi \cdot \partial_l\partial_j d\phi].
\end{align}
$ A $ is canonically identified with a symmetric bilinear form on symmetric bilinear forms $E$, 
like in linear elasticity, where $E$ plays the role of the strain tensor.
Hence by isotropy it has the form
\begin{align*}
2 c_1 {\rm tr} E^* E + c_2 ({\rm tr} E)^2,
\end{align*}
with the constants corresponding to the Lam\'e constants in elasticity, c.~f.~(4.1) in \cite{LandauLifschitz7}.
By symmetry of $ E $, this means component-wise
\begin{align}\label{wr23}
A_{ijkl}= c_1 (\delta_{ik}\delta_{jl}+\delta_{il}\delta_{jk}) + c_2 \delta_{ij}\delta_{kl}.
\end{align}
In view of (\ref{wr22}) and stationarity, $ A_{ijkl} $ is invariant under permutation of all four indices, in particular $ A_{1212} = A_{1122} $, which implies $ c_1 = c_2 $, so that (\ref{wr23}) specifies to
\begin{align}\label{wr27}
A_{ijkl}= c (  (\delta_{ik}\delta_{jl}+\delta_{il}\delta_{jk}) + \delta_{ij}\delta_{kl} ),
\quad\mbox{in particular}\quad A_{ijij}= 8 c
\end{align}
for some (deterministic) constant $ c $. Note that $ A_{ijij}d \tilde\lambda^2 = \varepsilon^2 [\Delta d\phi\cdot\Delta d\phi] $. This quadratic variation can be deduced from (\ref{wr18}). Indeed, on the level of covariance functions we have
\begin{equation}\label{wr25prev}
c_{ \Delta \phi^i_{ L+ } - \Delta \phi^i_{ L } } =  \partial_m \partial_l \, c_{ \nabla \phi^i_{ L+} - \nabla \phi^i_{ L } } (e^m, e^l).
\end{equation}
Since the latter expression is the covariance of a gradient, and hence of form $ \mathcal{F} c_{ \nabla \phi^i_{ L+} - \nabla \phi^i_{ L } } = k \otimes k \, \mathcal{F} c_{ \phi^i_{ L+} - \phi^i_{ L } }  $ in Fourier space, one can check that
\begin{equation}\label{wr25prev2}
\mathcal{F} c_{ \Delta \phi^i_{ L+ } - \Delta \phi^i_{ L } }
= | k |^2 \tr \mathcal{F} c_{ \nabla \phi^i_{ L+ } - \nabla \phi^i_{ L }  }
\quad { \rm distributionally}.
\end{equation}
In Fourier space $ c_{  \nabla \phi^i_{ L+ } - \nabla \phi^i_{ L }  } $ is concentrated on the annulus $ \{ L_{+}^{-1} \leq | k | \leq L^{-1} \} $. Therefore in the limit $ M \downarrow 1 $, the factor $ | k |^2 $ in (\ref{wr25prev2}) may be replaced by $ L^2 $, so that the identity can be used to deduce
$$
A_{ijij} d \tilde\lambda^2
= \varepsilon^2 [\Delta d\phi\cdot\Delta d\phi]
= \frac{ \varepsilon^2 }{ L^2 } [ \nabla d \phi^i \cdot \nabla d \phi^i ].
$$
The last contraction is nothing but the trace of (\ref{wr18}) so that
\begin{align}\label{wr25}
A_{ijij}=\frac{1}{\tilde\lambda^2 L^2}.
\end{align}
Re-interpreting (\ref{wr27}), into which we insert (\ref{wr25}), tensorially,
we obtain (\ref{wr26}).
}%%%%%%%%%%%%%%%%%%%%%%%%%END IGNORE%%%%%%%%%%%%%%%%%%%%%%%%%%%%%%%%%%%%%%%%%%%%%%%%%%%%

To estimate $ \E | f |^2 $, one needs the estimates on $ \E | \tilde\phi |^2 $, $ \E | \tilde\sigma |^2 $ and $ \E | \psi\tilde\phi |^2 $ that are stated in \cite[Lemma 4]{CMOW}. We now give their proofs in the continuum setting.

\begin{proof}[Proof of {\cite[Lemma 4]{CMOW}} revisited.]
\textit{Step 1: Evolution of $ \E | \tilde\phi |^2 $.} We claim that
\begin{equation}\label{adda01}
d \E |\tilde\phi|^2
- \varepsilon^2 \E | \tilde\phi |^2 \frac{ ds }{2\tilde\lambda^2}
= \varepsilon^2 \frac{ L^2 ds }{\tilde\lambda^2},
\end{equation}
which integrates to
\begin{equation}\label{adda02}
\E |\tilde\phi|^2 \lesssim \frac{\varepsilon^2 L^2}{\tilde\lambda^2}.
\end{equation}

\medskip

Let us first argue how (\ref{adda01}) implies (\ref{adda02}): We introduce the variables $ a := \E | \tilde\phi |^2 $ and $ x := \tilde\lambda^2 $ so that $ L^2 = \exp( \frac{2}{\varepsilon^2} ( x - 1 ) ) $, $ d \tilde\lambda^2 = \varepsilon^2 ds $ and (\ref{adda01}) reads
$$
\sqrt{x} \frac{d}{dx} \frac{a}{\sqrt{x}}
= \frac{da}{dx} - \frac{1}{2x} a
= \frac{ \exp ( \frac{2}{\varepsilon^2} ( x - 1 ) ) }{ x },
$$
which, since $ a(x = 1) = 0 $, implies
$$
a
= \frac{ \exp( \frac{2}{\varepsilon^2} ( x - 1 ) ) }{ x  }
\int_1^x dy \, ( \frac{ x }{ y } )^{\frac{3}{2}} \exp( \frac{2}{\varepsilon^2} ( y - x ) )
\lesssim \varepsilon^2 \frac{ \exp( \frac{2}{\varepsilon^2} ( x - 1 ) ) }{ x  }.
$$

\medskip

We now argue that (\ref{adda01}) holds. 
Since $\tilde\phi$ is a martingale we have
\begin{align*}
d\mathbb{E}|\tilde\phi|^2=\mathbb{E}[d\tilde\phi\cdot d \tilde\phi].
\end{align*}
Inserting (\ref{wr04}), the same parity argument that led to (\ref{wr13}) yields the splitting
\begin{align}\label{wr36}
d\mathbb{E}|\tilde\phi|^2
= \varepsilon^2 \E [ d\phi \cdot d\phi]
+ \varepsilon^2\mathbb{E} \tilde\phi^i \tilde\phi^j [ \partial_i d \phi \cdot \partial_j d \phi ].
\end{align}
Turning to the quadratic variation of the drivers, we have by isotropy
\begin{align}\label{wr18b}
2\tilde\lambda^2[\partial_i d \phi \cdot \partial_j d \phi ]
=\delta_{ij}\tilde\lambda^2[{\rm tr}\nabla d\phi(\nabla d\phi)^*] \stackrel{(\ref{wr18})}{=}
\delta_{ij}ds.
\end{align}
% 
%to the effect of 
%$$
%[ d \tilde\phi \cdot d \tilde\phi ]
%= \varepsilon^2 [ d\phi \cdot d \phi ] 
%+ \varepsilon^2| \tilde\phi |^2 \frac{ds}{2 \tilde\lambda^2}.
%$$
%
By the same argument that implied (\ref{wr35}) one infers
\begin{align}\label{wr18c}
\tilde\lambda^2[d\phi\cdot d\phi]=L^2\tilde\lambda^2[{\rm tr}\nabla d\phi(\nabla d\phi)^*]
\stackrel{(\ref{wr18})}{=}L^2ds.
\end{align} 
Inserting these two identities into (\ref{wr36}) and appealing to (\ref{wr37})
gives (\ref{adda01}).

\ignore{%%%%%%%%%%%BEGIN IGNORE%%%%%%%%%%%%%%%%%%%%%%%%%%%%%%%%%%%%%%%%%%%%%%%%%%%%%%%%
we have
\begin{equation}\label{cov01}
\varepsilon^2 [ d\phi \cdot d \phi ]
= L^2 \varepsilon^2 \tr [ \nabla d \phi ( \nabla d \phi )^* ]
= \frac{L^2 d \tilde\lambda^2}{\tilde\lambda^2}.
\end{equation}
Together, the last two equations combine to
\begin{equation}\label{adda01a}
[ d \tilde\phi \cdot d \tilde\phi ]
= \frac{ L^2 d \tilde\lambda^2}{\tilde\lambda^2}
+ | \tilde\phi |^2 \frac{d\tilde\lambda^2}{2 \tilde\lambda^2},
\end{equation}
which implies (\ref{adda02}).
}%%%%%%%%%%%%%%%%%%%%%%END IGNORE%%%%%%%%%%%%%%%%%%%%%%%%%%%%%%%%%%%%%%%%%%

\medskip

\textit{Step 2: Evolution of $ \E | \tilde\sigma |^2 $.} We claim that
\begin{equation}\label{adda04}
d \E | \tilde\sigma |^2
= \varepsilon^2 \E | \tilde\sigma |^2 \frac{ds}{2 \tilde\lambda^2} 
+ 2 \varepsilon^2 \E \tilde\sigma \cdot J \tilde\phi \frac{ds}{\tilde\lambda}
+ \varepsilon^2 L^2 ds
+ \varepsilon^2 \E | \phi |^2 ds,
\end{equation}
which upon integration becomes
\begin{equation}\label{adda05}
\E | \tilde\sigma |^2 \lesssim \varepsilon^2 L^2.
\end{equation}
Indeed, using Young's inequality
and (\ref{adda02}) on (\ref{adda04}) implies
$$
d \E | \tilde\sigma |^2 - \varepsilon^2 \E | \tilde\sigma |^2 \frac{ds}{ \tilde\lambda^2 } \lesssim \varepsilon^2 L^2 ds,
$$
which in terms of $ b := \E | \tilde\sigma |^2 $ and (the previously introduced) $ x = \tilde\lambda^2 $ may be rewritten as
$$
x \frac{d}{dx} \frac{b}{x} = \frac{d b}{dx} - \frac{b}{x} \lesssim \exp( \frac{2}{\varepsilon^2} (x - 1) ),
$$
so that
$$
b \lesssim \exp( \frac{2}{\varepsilon^2} ( x - 1 ) ) \int_1^x dy \, \frac{x}{y} \exp( \frac{2}{\varepsilon^2} ( y - x ) ).
$$
(\ref{adda05}) follows from the last estimate.

\medskip

Now comes the argument for \eqref{adda04}. Note that by Itô's formula, we have
$$
d | \tilde\sigma |^2 = 2 \tilde\sigma \cdot d \tilde\sigma + [ d \tilde\sigma \cdot d \tilde\sigma ]
$$
so that by (\ref{wr02}), in which we substitute (\ref{wr37}), we obtain
$$
d \E | \tilde\sigma |^2 = 2 \varepsilon^2 \E \tilde\sigma \cdot J \tilde\phi \frac{ds}{\tilde\lambda} + \E [ d \tilde\sigma \cdot d \tilde\sigma ].
$$
Let us observe that a parity argument similar to (\ref{wr13}) implies
$$
[ d\tilde\sigma \cdot d\tilde\sigma ]
= \varepsilon^2 \tilde\sigma^i \tilde\sigma^j [ \partial_i d\phi \cdot \partial_j d\phi ]
+ 2 \varepsilon^2 \tilde\sigma^i \tilde\phi \cdot [ d \psi \, \partial_i d \phi ]
+ \varepsilon^2 [ ( d\sigma + d\psi \, \tilde\phi ) \cdot ( d\sigma + d\psi \, \tilde\phi ) ].
$$
Using (\ref{wr14}), (\ref{wr37}) and (\ref{wr18b}) this equation simplifies to
$$
[ d\tilde\sigma \cdot d\tilde\sigma ]
= \varepsilon^2 | \tilde\sigma |^2 \frac{ds}{2 \tilde\lambda^2} 
+ \varepsilon^2 \tilde\sigma \cdot J \tilde\phi \frac{ds}{\tilde\lambda}
+ \varepsilon^2 [ d\sigma \cdot d \sigma ]
+ 2 \varepsilon^2 \tilde\phi \cdot [ d\psi \, d\sigma ]
+ \varepsilon^2 | \tilde\phi |^2 [ d \psi \, d \psi ].
$$
Since $ \tilde\phi $ has vanishing expectation, the mixed term $ \tilde\phi \cdot [ d \psi \, d \sigma ] $ has vanishing expectation, so that overall
$$
d \E | \tilde\sigma |^2
= \varepsilon^2 \E | \tilde\sigma |^2 \frac{ds}{2 \tilde\lambda^2} 
+ 2 \varepsilon^2 \E \tilde\sigma \cdot J \tilde\phi \frac{ds}{\tilde\lambda}
+ \varepsilon^2 \E [ d\sigma \cdot d\sigma ]
+ \varepsilon^2 \E | \tilde\phi |^2 ds.
$$
Below we will argue that
\begin{equation}\label{fosw02}
\varepsilon^2 [d \sigma \cdot d \sigma] = \varepsilon^2 L^2 ds,
\end{equation} 
which together with the previous equation implies (\ref{adda04}).

\medskip

We now give the argument for \eqref{fosw02}. Via an integration by parts we obtain $ \E \nabla d \phi^i \cdot J \nabla d \sigma^i = 0 $ so that in particular $ \tr \E \nabla d \phi ( J \nabla d \sigma )^* = 0 $, which means that $ \nabla d \phi $ and $ J \nabla d \sigma $ are uncorrelated. Hence, (\ref{wr01}) provides an orthogonal decomposition of $ d\psi J $. Therefore, and since all of the following expressions are deterministic, we may conclude
$$
[ \tr d \psi J ( d \psi J )^* ]
= \tilde\lambda^2 [ \tr \nabla d \phi ( \nabla d \phi )^* ] + [ \tr J \nabla d \sigma ( J \nabla d \sigma )^* ].
$$
In view of (\ref{wr31}), and (\ref{wr18}) this implies
$$
[ \tr \nabla d \sigma ( \nabla d \sigma )^* ]
= 2 [ d \psi \, d \psi ] - \tilde\lambda^2 [ \tr \nabla d \phi ( \nabla d \phi )^* ]
=  ds
$$
The argument that leads to (\ref{wr35}) now implies (\ref{fosw02}).

\medskip

\textit{Step 3: Evolution of $ \E | \tilde\phi |^ 4 $.} We claim that
\begin{equation}\label{eqn4m}
d \E | \tilde\phi |^4
\lesssim \varepsilon^2 \E | \tilde\phi |^4 \frac{ds}{ \tilde\lambda^2}
+ \varepsilon^2 \E | \tilde\phi |^2 \frac{ L^2 ds}{\tilde\lambda^2},
\end{equation}
which integrates to
\begin{equation}\label{eqn4mb}
\E | \tilde\phi |^4 \lesssim \varepsilon^4 \frac{L^4}{\tilde\lambda^4}.
\end{equation}
The integration follows along the lines of (\ref{adda05}).

\medskip

Itô's formula implies
$$
d | \tilde\phi |^2 = 2 \tilde\phi \cdot d \tilde\phi + [ \tilde\phi \cdot \tilde\phi ],
$$
which we  combine with (\ref{wr04}) and (\ref{wr18b}), (\ref{wr18c}) on the quadratic variation to obtain
$$
d | \tilde\phi |^2 
= 2 \varepsilon \tilde\phi \cdot d \phi
+ 2 \varepsilon \tilde\phi^i \tilde\phi \cdot \partial_i d \phi 
+ \varepsilon^2 \frac{ L^2 ds }{\tilde\lambda^2}
+ \varepsilon^2 | \tilde\phi |^2 \frac{ds}{2 \tilde\lambda^2}.
$$
Since the covariation of $ \phi $ and $ \partial_i \phi $ vanishes, we obtain
$$
\begin{aligned}
d | \tilde\phi |^4
&= \text{martingale}
+ \varepsilon^2 | \tilde\phi|^4 \frac{ds}{ \tilde\lambda^2}
+ 2 \varepsilon^2 | \tilde\phi|^2 \frac{L^2 ds}{ \tilde\lambda^2} \\
& + 4 \varepsilon^2 [ ( \tilde\phi \cdot d \phi ) ( \tilde\phi \cdot d \phi ) ]
+ 4 \varepsilon^2 [ ( \tilde\phi^i \tilde\phi \cdot \partial_i d \phi ) ( \tilde\phi^j \tilde\phi \cdot \partial_j d \phi ) ],
\end{aligned}
$$
which implies
$$
\begin{aligned}
d \E | \tilde\phi |^4
&= \varepsilon^2 \E | \tilde\phi |^4 \frac{ds}{ \tilde\lambda^2}
+ 2 \varepsilon^2 \E | \tilde\phi |^2 \frac{ L^2 ds}{\tilde\lambda^2} \\
& + 4 \varepsilon^2 \E [ ( \tilde \phi \cdot d \phi ) ( \tilde\phi \cdot d \phi ) ]
+ 4 \varepsilon^2 \E [ ( \tilde\phi^i \tilde\phi \cdot \partial_i d \phi ) ( \tilde\phi^j \tilde\phi \cdot \partial_j d \phi ) ].
\end{aligned}
$$
For the third term, we use (\ref{wr37}) and (\ref{wr18b}), which yields
$$
4 \varepsilon^2 \E [ ( \tilde \phi \cdot d \phi ) ( \tilde\phi \cdot d \phi ) ]
= 2 \varepsilon^2 \E | \tilde\phi |^2 \frac{ L^2 ds }{ \tilde\lambda^2 }.
$$
For the fourth term, we note that the full covariation of first derivatives can be estimated by Cauchy-Schwarz, i.~e.
$$
\E [ \partial_i d \phi^k \, \partial_j d \phi^l ] 
\leq \E \tr [ ( \nabla d \phi )^* ( \nabla d \phi ) ].
$$
By a contraction we obtain the estimate
$$
\E [ ( \tilde\phi^i \tilde\phi \cdot \partial_i d \phi ) ( \tilde\phi^j \tilde\phi \cdot \partial_j d \phi ) ]
\lesssim \E | \tilde\phi |^4 \tr [ ( \nabla d \phi )^* ( \nabla d \phi ) ].
$$
By using (\ref{wr18}) this yields (\ref{eqn4m}).
\end{proof}

We collected all ingredients to estimate $ \E | f |^2 $,~i.~e.~provide an alternative proof of \cite[Lemma 5]{CMOW}.

\begin{proof}[Proof of Proposition \ref{proposition}.]
We are now ready to prove
\begin{equation}\label{adda10}
\E |f|^2 \lesssim \varepsilon^2 \tilde\lambda
\end{equation}
in the continuum setting.
Indeed, (\ref{add07}), together with (\ref{fosw01}) and (\ref{adda02}), implies
\begin{align*}
d\mathbb{E} | f |^2 - \varepsilon^2 \mathbb{E} | f |^2 \frac{ ds }{ 2 \tilde\lambda^2 }
\lesssim \varepsilon^2 \E | \varepsilon \psi \tilde\phi - \tilde\sigma |^2 \frac{ ds }{ L^2 \tilde\lambda^2 } 
+ \varepsilon^4 \frac{ ds }{ \tilde\lambda^2 }.
\end{align*}
By using Cauchy-Schwarz on the product $ | \tilde\phi \psi |^2 $ together with (\ref{adda05}) and (\ref{eqn4mb}) the above becomes
$$
d \E|f|^2 - \varepsilon^2 \E |f|^2 \frac{ds}{2\tilde\lambda^2}
\lesssim \varepsilon^6 ( \E \psi^4 )^{ \frac{1}{2} } \frac{ ds }{ \tilde\lambda^4 } + \varepsilon^4 \frac{ ds }{ \tilde\lambda^2 }.
$$
The Gaussianity of $ \psi $ implies $ ( \E \psi^4 )^{ \frac{1}{2} } \sim \E \psi^2 = \log L $, which together with (\ref{wr08}) in form of $ \tilde\lambda^2 = 1 + \varepsilon^2 \log L $, shows that the last term on the r.~h.~s.~is the dominant term so that
$$
d \E|f|^2 - \varepsilon^2 \E |f|^2 \frac{ds}{2\tilde\lambda^2}
\lesssim \varepsilon^4 \frac{ds}{\tilde\lambda^2}.
$$
In terms of $ c := \E |f|^2 $ and $ x = \tilde\lambda^2 $ this may be rewritten in the form
$$
\sqrt{x} \frac{d}{dx} \frac{c}{\sqrt{x}} \lesssim \varepsilon^2 \frac{1}{x}
$$
so that
$$
c \lesssim \varepsilon^2 \sqrt{x} \int_1^x dy \, \frac{1}{y^{\frac{3}{2}}}.
$$
This is nothing but (\ref{adda10}) in terms of $ c $ and $ x $.
\end{proof}

\section*{Acknowledgements}

The authors thank Lihan Wang for fruitful discussions in earlier stages of this work.


\begin{thebibliography}{99}

\bibitem{BG}
	N. Barashkov and M. Gubinelli.
	\newblock{A variational method for $ \Phi^4_3 $.}
	\newblock{\emph{Duke Math. J.} 169 (17) 3339 - 3415, 15 November 2020.}

\bibitem{CMOW}
	G. Chatzigeorgiou, P. Morfe, F. Otto, and L. Wang.
	\newblock{The Gaussian free-field as a stream function: asymptotics of effective diffusivity in infra-red cut-off.}
	\newblock{\emph{arXiv preprint} arXiv:2212.14244 (2022).}
	
\bibitem{GM}
	M. Gubinelli, and S.-J. Meyer.
	\newblock{The FBSDE approach to sine-Gordon up to $ 6\pi $.}
	\newblock{\emph{arXiv preprint} arXiv:2401.13648 (2024).}

\end{thebibliography}
\end{document}